\journal{the arXiv}
\newtheorem{thm}{Theorem}[section]
\newtheorem{cor}[thm]{Corollary}
\newtheorem{lem}[thm]{Lemma}
\newtheorem{conj}[thm]{Conjecture}
\newtheorem{question}[thm]{Question}
\DeclareMathOperator{\girth}{girth}
\def\BState{\State\hskip-\ALG@thistlm}
\def \N {\mathcal{N}}
\begin{document}

\begin{frontmatter}

\title{Distance Preserving Graphs}
\tnotetext[label0]{\em{This paper has been accepted in 45th Southeastern International Conference on
Combinatorics, Graph Theory, and Computing for which it has received the best student paper award.}}

\author{Emad Zahedi}
\address{Department of Mathematics, Michigan State University, \\
              East Lansing, MI 48824-1027, U.S.A.}
\address{Department of Computer Science and Engineering,	Michigan State University\\
							East Lansing, MI 48824, U.S.A.}


\ead{Zahediem@msu.edu}

\begin{abstract}
Given a graph $G$ then a subgraph $H$ is {\em isometric} if, for every pair of vertices $u,v$ of $H$, we have $d_H(u,v) = d_G(u,v)$. We say a graph $G$ is {\em distance preserving (dp)} if it has an isometric subgraph of every possible order up to the order of $G$.
We  consider how to add a vertex to a dp graph so that the result is a dp graph. This condition implies that chordal graphs are dp. We also find a condition on the $\girth$ of $G$ which implies that it is not dp. In closing, we discuss other work and open problems concerning dp graphs.
\end{abstract}

\begin{keyword}
Chordal, distance preserving, girth, isometric subgraph, simplicial vertex.\\
AMS subject classification (2015): 05C12
\end{keyword}

\end{frontmatter}



\section{Introduction}\label{Introduction}

Computing distance properties of large graphs such as real-world social networks which consist of  millions of nodes is extremely expensive.  
Recomputing distances in subgraphs of the original network will be even more costly.
A solution to remedy this issue would be to find subgraphs which have the same distances as the original network.
Such a subgraph is called {\em isometric}. Distance properties where isometric subgraphs come into play have been used in network clustering~\cite{NF2, nussbaum2013clustering}.

One family of graphs which has been studied in the literature involving isometric subgraphs is the set of distance-hereditary graphs. A {\em distance-hereditary} graph is a connected graph in which every connected induced subgraph of $G$ is isometric~\cite{howorka1977characterization}.
Distance-hereditary graphs have been studied in various papers~\cite{bandelt1986distance, damiand2001simple, hammer1990completely} since they were first described by Howorka~\cite{howorka1977characterization}. In this article, we relax this property by using a notion we call distance preserving. 

A graph is {\em distance preserving}, for which we use the abbreviation dp,  if it has an isometric subgraph of each possible order. The definition of a distance-preserving graph is similar to the one for distance-hereditary graphs, but is less restrictive. Because of this, distance-preserving graphs can have a more complex structure than distance-hereditary ones.

It is easy to see that trees are dp by removing leaves. In the present work we investigate conditions under which adding a vertex to a dp graph preserves the dp property. By applying this construction recursively to $K_1$, one can construct various families of dp graphs. We use this method to prove that chordal graphs (which include trees) are dp. As opposed to the acyclic case, the presence of certain cycles can cause a graph not to be dp. We show that if $G$ is a graph with $\girth (G) \geq 5$ and every vertex is either a cut vertex or in a cycle, then $G$ does not have any isometric subgraph of order $|V(G)|-1$ and so is not dp.

\section{Background}\label{Background}

In this paper every graph is finite, undirected, simple, and connected. In a graph $G=(V,E)$, a {\em path} is a sequence of distinct vertices $v_0, \dots, v_k$ such that $v_iv_{i+1}\in E(G)$ for $i=0,\dots, k-1$.  The length of the path is $k$, the number of edges. A {\em cycle} of a graph is a sequence of vertices $v_0, \dots,v_k$ which are distinct, except for $v_0=v_k$, and $v_iv_j\in E(G)$ if $|i-j|=1$  (mod $k$). The length of a cycle C is its number of edges. The {\em girth} of a graph $G$ is the smallest length of a cycle in $G$ and denoted by $\girth (G)$.

The {\em distance} between vertices $u,v$ in $G,\ d_G(u,v),$ is the minimal length of a path connecting these vertices.
 If it is clear from  context, we will use $d(u,v)$, instead of $d_G(u,v)$. A path $P$ from $u$ to $v$ with length $d_G(u,v)$ is called $u$-$v$ {\em geodesic} path. 
An induced subgraph $H$ of a graph $G$ is called an {\em isometric} subgraph if $d_H(a,b)=d_G(a,b)$, for every pair of vertices $a,b \in V(H)$, denoted by $ H \leq G $. 
A connected graph $G$ is called {\em distance preserving (dp)} if and only if it has an $i$-vertex isometric subgraph for every $1 \leq i \leq |V(G)|$.

If $ G $ is a graph and $ A \subseteq V(G) $ then $ G[A] $ denotes the subgraph induced by $ A $. The set of vertices adjacent to $v\in V$ is called its {\em neighborhood} and denoted $\N(v)$.  A vertex $v\in V(G)$ is called a {\em simplicial} vertex if $G[\N(v)]$ is a clique.
A graph $G$ is said to have a {\em simplicial elimination ordering} if there is an ordering $V(G) = \{v_1, \dots, v_{|V(G)|} \}$ such that $v_j$ is simplicial in $G[v_1,\cdots, v_{j}]$ for $1 \leq j  \leq |V(G)|$.

\section{Chordal Graphs}\label{chordal}
A {\em chordal} graph is a graph in which any cycle of length four or more has a chord. A graph $G$ is chordal if and only if $G$ has a simplicial elimination order~\cite{fulkerson1965incidence}.   The following lemma will permit us to prove that all chordal graphs are dp.

\begin{lem}\label{B} Let $v$ be a simplicial vertex in $G$. If $G-v$ is a dp graph then $G$ is dp.
\end{lem}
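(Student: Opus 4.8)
The plan is to show that deleting a simplicial vertex changes no distance among the surviving vertices, so that $G-v$ sits inside $G$ as an isometric subgraph; then the dp-structure of $G-v$ transfers verbatim to $G$, and $G$ itself supplies the one missing order. Write $n = |V(G)|$, so that $|V(G-v)| = n-1$. Since $G-v$ is dp it carries an isometric subgraph $H_i$ of each order $i$ with $1 \le i \le n-1$; my goal is to promote each $H_i$ to an isometric subgraph of $G$ and then to append $G$ itself as the order-$n$ piece.

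The heart of the argument is the claim that $d_{G-v}(a,b) = d_G(a,b)$ for every pair $a,b \in V(G-v)$. The inequality $d_G(a,b) \le d_{G-v}(a,b)$ is immediate since $G-v$ is a subgraph of $G$. For the reverse inequality I would take a $G$-geodesic $P$ from $a$ to $b$ and argue that it may be chosen to avoid $v$. Indeed, if $P$ passes through $v$ then, since $a \ne v \ne b$, the vertex $v$ is internal, so $P$ contains a subpath $x, v, y$ with $x, y \in \N(v)$. Because $v$ is simplicial, $\N(v)$ induces a clique, whence $xy \in E(G)$; replacing $x, v, y$ by the edge $x, y$ yields an $a$-$b$ walk strictly shorter than $P$, contradicting that $P$ is a geodesic. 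Thus every shortest $a$-$b$ path in $G$ lives in $G-v$, giving $d_{G-v}(a,b) \le d_G(a,b)$ and hence equality. In the notation of the paper this says precisely that $G-v \le G$.

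With the distance claim in hand the rest is assembly. Fix $i$ with $1 \le i \le n-1$ and let $H_i \le G-v$ be an isometric subgraph of order $i$. For any $a,b \in V(H_i)$ we then have $d_{H_i}(a,b) = d_{G-v}(a,b) = d_G(a,b)$, where the first equality is the isometry of $H_i$ in $G-v$ and the second is the claim just proved; hence $H_i$ is an isometric subgraph of $G$ of order $i$. Finally $G$ is trivially an isometric subgraph of itself of order $n$. Collecting these, $G$ possesses an isometric subgraph of every order $1 \le i \le n$, so $G$ is dp.

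I expect the only genuinely nontrivial step to be the distance-preservation claim, i.e. verifying $G-v \le G$; the transfer of the $H_i$ and the handling of the top order $n$ are then routine bookkeeping. The simpliciality of $v$ is used in exactly one place, to replace a length-two detour through $v$ by a single clique edge, and it is worth noting that this short-cutting is precisely what fails for a non-simplicial vertex, foreshadowing the girth obstruction discussed later in the paper.
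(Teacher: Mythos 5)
Your proposal is correct and follows essentially the same route as the paper: you establish that $G-v$ is an isometric subgraph of $G$ by short-cutting any geodesic through $v$ via a clique edge in $\N(v)$, and then transfer the isometric subgraphs of $G-v$ up to $G$ (the paper phrases this transfer as transitivity of the isometric-subgraph relation, while you verify the chain of distance equalities explicitly, which amounts to the same thing). No gaps.
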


\begin{proof} Let $G'=G-v$ and $n=|V|$.  We claim it suffices to show that $G'$ is an isometric subgraph of $G$.  Indeed,  $G'$ will be an isometric subgraph of $G$ of order $n-1$.  And for $k<n-1$, the fact that $G'$ is dp implies that there is an isometric subgraph $H$ of $G'$.  But then $H$ is also isometric in $G$ because being an isometric subgraph is a transitive relation.

To show $G'$ is isometric in $G$, consider $x,y\in V(G')$.
Since $v$ is simplicial in $G$, $G[\N_{G}(v)]$ is an induced complete subgraph of $G$.  We claim that  in $G$, any $x$-$y$ geodesic  can not contain $v$. 
This implies that $d_G(x,y)=d_{G'}(x,y)$, and thus $G'$ will be isometric as desired.
Suppose, towards a contradiction, that  $P: x,\dots, u,v,w, \dots, y$ is an $x$-$y$ geodesic in $G$ 
then $u, w$ lie in $\N_{G}(v)$. But $G[\N_{G}(v)]$ is complete, so $uw\in E(G)$ and $\hat{P}:$ $x,u_1,\dots, u,w, \dots, y$ is another path from $x$ to $y$ in $G$ which is shorter than $P$. This contradicts the fact that $P$ is an $x$-$y$ geodesic and finishes the proof.
\end{proof}

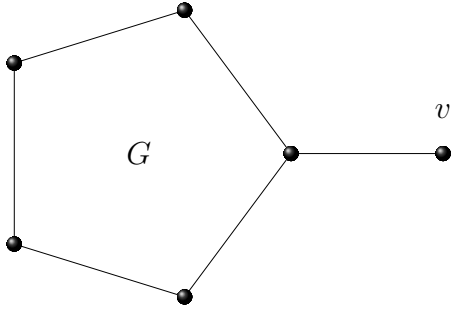
\begin{figure}
\centering
\begin{center}
    \begin{tikzpicture}

\draw[line width=.3pt] (4,0) to  (2,0);
\draw[line width=.3pt] (2,0) to  (.6,-1.9);
\draw[line width=.3pt] (2,0) to  (.6,1.9);
\draw[line width=.3pt] (.6,1.9) to  (-1.64,1.2);
\draw[line width=.3pt] (.6,-1.9) to  (-1.64,-1.2);
\draw[line width=.3pt] (-1.64,1.2) to  (-1.64,-1.2);

\node [below] at (0,0.3) {$G$};
\node [below] at (4,.8) {$v$};

\shade[shading=ball, ball color=black] (4,0) circle (.1);
\shade[shading=ball, ball color=black] (2,0) circle (.1);
\shade[shading=ball, ball color=black] (-1.64,1.2) circle (.1);
\shade[shading=ball, ball color=black] (-1.64,-1.2) circle (.1);
\shade[shading=ball, ball color=black] (.6,-1.9) circle (.1);
\shade[shading=ball, ball color=black] (.6,1.9) circle (.1);

	\end{tikzpicture}
\end{center}

\caption[]{A counterexample graph $G$ to the converse of Lemma~\ref{B}.}
\label{fig:figure1}
\end{figure}

The converse of the Lemma~\ref{B} is not true. In Figure~\ref{fig:figure1}, it is easy to check that $G$ is a dp graph and $G[\N_{G}(v)]$ is complete. But $G-v = C_5$ is not dp since removal of any vertex of the cycle results in a subgraph which is a path and not isometric in $C_5$.

The next corollary generalizes the fact mentioned previously that all trees are dp.

\begin{cor}\label{D} Chordal graphs are dp.
 \end{cor}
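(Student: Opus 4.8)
The plan is to prove Corollary~\ref{D} by induction on the order $n = |V(G)|$, using Lemma~\ref{B} as the inductive step together with the characterization of chordal graphs via simplicial elimination orderings. The base case $n=1$ is trivial, since $K_1$ is vacuously dp. For the inductive step, I would let $G$ be a chordal graph on $n$ vertices and invoke the theorem of Fulkerson and Gross cited in the excerpt, which guarantees that $G$ possesses a simplicial elimination ordering $v_1, \dots, v_n$.

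The key observation is that the last vertex $v_n$ in such an ordering is a simplicial vertex of $G$ itself (taking $j = n$ in the definition of simplicial elimination ordering). I would then consider $G' = G - v_n$. The crucial point is that $G'$ is again chordal: deleting a vertex from a chordal graph cannot create a chordless cycle of length four or more, since any induced cycle in $G'$ is also an induced cycle in $G$. Equivalently, $v_1, \dots, v_{n-1}$ is a simplicial elimination ordering of $G'$. By the induction hypothesis, $G'$ is dp. Since $v_n$ is simplicial in $G$ and $G - v_n = G'$ is dp, Lemma~\ref{B} applies directly and yields that $G$ is dp, completing the induction.

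I do not anticipate a serious obstacle here, as the argument is essentially a clean assembly of ingredients already in place. The only point requiring a moment of care is verifying that chordality is inherited by $G' = G - v_n$, so that the induction hypothesis is legitimately available; this follows immediately from the definition of chordal, but it is worth stating explicitly rather than leaving implicit. One could alternatively phrase the entire proof without induction by peeling off the simplicial vertices in reverse elimination order and applying Lemma~\ref{B} repeatedly, starting from $K_1$ (which is dp) and building up to $G$; this is really just the inductive argument unrolled, and matches the ``applying this construction recursively to $K_1$'' language in the introduction. Either formulation reduces the corollary to a short deduction from the lemma.
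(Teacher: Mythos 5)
Your proposal is correct and follows essentially the same route as the paper: induction on $n$ using a simplicial elimination ordering, peeling off the simplicial vertex $v_n$, and applying Lemma~\ref{B}. Your explicit remark that $G - v_n$ remains chordal (so the induction hypothesis applies) is a small point the paper leaves implicit, but the argument is otherwise identical.
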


\begin{proof} Our proof is  by induction on $n=|V|$. The result  is clear when $n=1$.
Given a chordal graph $G$, let $v_1,v_2,...,v_n$ be a simplicial elimination order for its vertices. By induction $G'=G[v_1,v_2,..,v_{n-1}]$ is dp and $v_n$ is simplicial in $G$. Using Lemma~\ref{B} implies that $G$ is dp.
\end{proof}

We can relax the condition in Lemma~\ref{B} as follows.

\begin{thm}\label{E}
 Let $G$ contain a vertex $v$ such that every pair of non-adjacent $u, w\in \N(v)$ are in a $4$-cycle in $G$. If $G-v$ is dp then so is $G$.
\end{thm}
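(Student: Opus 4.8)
The plan is to imitate the proof of Lemma~\ref{B}: I will show that $G' = G - v$ is an isometric subgraph of $G$, after which the dp property of $G$ follows by exactly the argument used there, since isometry is transitive and $G'$ is dp. So it suffices to prove that $d_G(x,y) = d_{G'}(x,y)$ for every $x, y \in V(G')$, or equivalently that some $x$-$y$ geodesic of $G$ can be chosen to avoid $v$.

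Fix $x, y \in V(G')$ and take an $x$-$y$ geodesic $P$ in $G$. If $P$ avoids $v$ there is nothing to prove, so suppose $v$ lies on $P$. Since $v \notin V(G')$ while $x, y \in V(G')$, the vertex $v$ is internal to $P$, so $P$ has the form $x, \dots, u, v, w, \dots, y$ with $u, w \in \N(v)$. These neighbors cannot be adjacent: otherwise replacing $u, v, w$ by the edge $uw$ would give a strictly shorter $x$-$y$ walk, contradicting that $P$ is a geodesic. Hence $u$ and $w$ are non-adjacent, and the hypothesis of the theorem applies to them.

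The central step is to reroute $P$ around $v$ using the guaranteed $4$-cycle through $u$ and $w$. Since $u, w$ are non-adjacent they cannot be consecutive on any cycle, so on a $4$-cycle they must occupy the two opposite positions; that cycle therefore reads $u, z_1, w, z_2$ and yields two \emph{distinct} common neighbors $z_1, z_2$ of $u$ and $w$. At most one of $z_1, z_2$ can equal $v$, so at least one of them, say $z$, is a common neighbor of $u$ and $w$ with $z \ne v$. Replacing the segment $u, v, w$ of $P$ by $u, z, w$ produces a walk $\hat P$ from $x$ to $y$ of the same length $d_G(x,y)$; because $v$ appears on $P$ only between $u$ and $w$ and $z \ne v$, the walk $\hat P$ avoids $v$ altogether, so all its edges lie in $G'$. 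This gives $d_{G'}(x,y) \le d_G(x,y)$, and the reverse inequality holds automatically since $G'$ is a subgraph of $G$; hence $d_{G'}(x,y) = d_G(x,y)$.

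I expect the only genuine subtlety to be the observation that the $4$-cycle supplies \emph{two} common neighbors, so that one can always be chosen different from $v$. This is precisely what lets the rerouting bypass $v$ without reintroducing it, and it is where the $4$-cycle hypothesis does the work that simple adjacency of $u$ and $w$ did in the simplicial case of Lemma~\ref{B}. A secondary point worth checking is that $\hat P$ is a walk in $G'$ and not merely in $G$; once that is confirmed, there is no need to argue that $\hat P$ is a path, since a walk of length $d_G(x,y)$ in $G'$ already bounds $d_{G'}(x,y)$ from above.
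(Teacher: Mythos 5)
Your proof is correct and follows essentially the same route as the paper: show $G-v$ is isometric in $G$ by rerouting any geodesic through $v$ via a common neighbor of $u$ and $w$ supplied by the $4$-cycle hypothesis. You are in fact slightly more careful than the paper at two points --- justifying that the $4$-cycle yields a common neighbor $z\neq v$ (since non-adjacent $u,w$ must be opposite on the cycle, giving two candidates), and observing that a walk of the right length in $G'$ suffices even if the rerouted sequence is not a simple path.
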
 

\begin{proof}
As in the proof of Lemma~\ref{B}, it suffices to show that $G'=G-v$ is an isometric subgraph of $G$.  So take $x,y\in V(G')$ and an $x$-$y$ geodesic $P$  in $G$.  It suffices to show that there is an $x$-$y$ geodesic in $G'$ of the same length.  If  $P$ does not contain $v$, then it is a geodesic in $G'$ and we are done.  If $P$  contains $v$, say $P: x,\dots, u,v,w, \dots, y$.  If $uw$ is an edge of $G$ then we derive a contradiction as in the proof of Lemma~\ref{B}.  If $u$ and $w$ are not adjacent then, by the cycle hypothesis, there must be a vertex $z\neq v$ with $uz,zw\in E(G)$.  So, by the choice of $z$, the path 
$\hat{P}:  x,\dots, u,z,w, \dots, y$ is an $x$-$y$ geodesic in $G'$ with the same length as $P$.
\end{proof}


\section{Girth}\label{section:3}

As we have seen, connected acyclic graphs (namely, trees) are dp. On the other hand, as also previously mentioned, the $5$-cycle $C_5$ is not dp.  We now give a condition on the girth of a graph $G$ which implies that it is not dp.   Note the contrast with the cycle condition in Theorem~\ref{E}.

\begin{thm}\label{A} 
Let $G$ be a graph such that $\girth (G) \geq 5$ and such that  every vertex is either a cut vertex or in a cycle. Then $G$ is not dp.
\end{thm}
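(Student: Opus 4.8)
The plan is to prove something slightly more precise than the bare statement: that $G$ has no isometric subgraph of order $n-1$, where $n=|V(G)|$. Since every induced subgraph on $n-1$ vertices is exactly $G-v$ for a unique vertex $v$, and isometric subgraphs are by definition induced, it suffices to show that for \emph{every} vertex $v$ the subgraph $G-v$ fails to be isometric in $G$. I would organize the argument around the dichotomy in the hypothesis: each $v$ is either a cut vertex or lies on a cycle (or both, in which case either case applies).

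First, suppose $v$ is a cut vertex. Then $G-v$ is disconnected, so there are vertices $x,y$ in distinct components. In the connected graph $G$ we have $d_G(x,y)<\infty$, while $d_{G-v}(x,y)=\infty$, so $d_{G-v}(x,y)\neq d_G(x,y)$ and $G-v$ is not isometric. This case is immediate.

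Next, suppose $v$ lies on a cycle $C$; since $\girth(G)\geq 5$, the cycle $C$ has length at least $5$. Let $a,b$ be the two (distinct) neighbors of $v$ along $C$. The crux is to compare $d_G(a,b)$ with $d_{G-v}(a,b)$. On one hand, $a$ and $b$ cannot be adjacent in $G$, for otherwise $a,v,b$ would close a triangle and contradict $\girth(G)\geq 5$; hence the path $a,v,b$ shows $d_G(a,b)=2$. On the other hand, I claim $d_{G-v}(a,b)\geq 3$: if it equalled $2$ there would be a common neighbor $w\neq v$ of $a$ and $b$, and then $a,v,b,w$ would be four distinct vertices closing a $4$-cycle $a\text{-}v\text{-}b\text{-}w\text{-}a$ in $G$, again contradicting $\girth(G)\geq 5$. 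Therefore $d_{G-v}(a,b)\geq 3>2=d_G(a,b)$, so $G-v$ is not isometric.

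In either case $G-v$ fails to be isometric, so $G$ has no isometric subgraph on $n-1$ vertices and is thus not dp. The step carrying the real content is the cycle case, which uses the girth bound twice: once to force $d_G(a,b)=2$ by forbidding the triangle $avb$, and once to force $d_{G-v}(a,b)\geq 3$ by forbidding an alternate common neighbor $w$ via the $4$-cycle. I would also remark that the theorem is non-vacuous only when $G$ genuinely contains a cycle, so $n\geq 5$ and the order $n-1$ is a legitimate order to scrutinize.
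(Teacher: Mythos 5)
Your proposal is correct and follows essentially the same argument as the paper: the cut-vertex case is dismissed by disconnectedness, and in the cycle case the girth bound is used exactly twice, once to rule out the triangle forcing $d_G(a,b)=2$ and once to rule out a second common neighbor forcing $d_{G-v}(a,b)\geq 3$. The only difference is presentational — you argue directly that every $G-v$ fails to be isometric, while the paper phrases it as a contradiction from assuming some $G-v$ is isometric.
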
 
\begin{proof}
 Assume that $G$ is dp so we can delete a vertex, say $ v$, to obtain an isometric subgraph of order $ |V(G)|-1 $ in $G$. Now $ v $  can not be a cut vertex since a disconnected subgraph of $G$ can not be isometric. Therefore $ v $ belongs to a cycle $C$ and there exist two vertices $u,w$ in $C$ such that $uvw$ is a path in $G$.
By assumption $\girth(G)\geq 5$ and so $G$ does not contain a $3$-cycle.  Thus $uw\notin E(G)$ and $d_G(u,w)=2$.
Since $G - v$ is isometric, $d_{G-v}(u,w)=2$ and consequently there is a vertex $\hat{v} \in V(G-v)$ so that $u, \hat{v}, w$ is a path in $G-v$. This implies $u,v,w,\hat{v},u$ is a 4-cycle in $G$ which contradicts  $\girth(G)\geq 5$.
Since the vertex $ v $ was arbitrary, $G$ has no isometric subgraph of order $|V(G)|-1$ and so is not dp.
\end{proof}

\section{Further Results and Future Work}\label{section:4}

Here is a list of other results which have been proved for dp graphs.
\begin{itemize}
\item We have found a necessary and sufficient condition for a given graph $G$ with a cut vertex to be dp. 
\item  We have shown that if $G$ is dp then so is the lexicographic product of $G$ with $H$ for any graph $H$.
\item  Call a graph {\em sequentially dp} if there is an ordering of the vertices $v_1, \dots, v_n$ such that  $G - \{v_ 1, \dots, v_s \} \leq G$ for $1 \leq s \leq |V (G)|$. We have proved that the Cartesian product of a sequentially dp graph $G$ with $H$ is dp for every dp graph $H$.
\end{itemize}

We end with some interesting questions and conjectures about distance preserving graphs.

\begin{conj}
If  $G$ be an $n$-vertex graph with minimum degree  $\delta(G) > n/2$ then $G$ is
dp. 
\end{conj}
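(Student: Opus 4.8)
The plan is to mimic the vertex-deletion strategy of Lemma~\ref{B} and Theorem~\ref{E}: peel off one vertex at a time, keeping an isometric subgraph at every stage, and induct on the order. First I would record the structural consequences of the hypothesis. If $\delta(G) > n/2$, then for any two non-adjacent $x,y$ we have $\N(x),\N(y)\subseteq V(G)\setminus\{x,y\}$, whence
\[|\N(x)\cap\N(y)|\ \ge\ \deg(x)+\deg(y)-(n-2)\ \ge\ 2\delta-(n-2)\ >\ 2\cdot\frac{n}{2}-(n-2)\ =\ 2,\]
so every non-adjacent pair has at least three common neighbours, and in particular $\diam(G)\le 2$. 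Because all distances in $G$ are $1$ or $2$, an induced subgraph $H$ is isometric exactly when every non-edge of $H$ has a common neighbour inside $H$; this is the only condition I would need to track.

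Second, I would set up the induction. The base case is $K_1$, which is trivially dp, and once the running subgraph becomes a clique (more generally, chordal) Corollary~\ref{D} finishes the job, so it suffices to drive the deletion down to that point. The key elementary observation is that \emph{deleting a single vertex always preserves isometry}: if $H$ is isometric in $G$ and every non-edge of $H$ has at least two common neighbours in $H$, then for any $v$ and any non-edge $x,y$ of $H-v$ at least one common neighbour survives, so $H-v$ is again isometric in $G$ by transitivity. Thus the order-$(n-1)$ subgraph is automatic; the whole problem is whether the deletion process can be continued all the way down.

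The difficulty is choosing $v$ so that the induction can proceed, because the hypothesis $\delta>n/2$ is \emph{not} preserved under deletion: the threshold $m/2$ for the new order $m=n-1$ drops by only $1/2$, while a vertex's degree can drop by a full unit, and a short parity computation shows that for odd $n$ the bound can fall to exactly $(n-1)/2$, i.e.\ onto the boundary. I would therefore replace the degree hypothesis by the self-referential invariant ``$H$ is isometric in $G$, and every non-edge of $H$ has at least two common neighbours in $H$'', which holds for $G$ itself by the count above. The crux is then the inductive step: showing that some $v$ can be deleted without dropping any \emph{tight} non-edge (one with exactly two common neighbours) down to a single common neighbour. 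Equivalently, I must find a vertex that is a common neighbour of no tight non-edge. I expect this existence statement to be the main obstacle, and I would attack it by a counting/extremal argument, bounding the number of tight non-edges and their supporting vertices and arguing they cannot cover all of $V(H)$, while handling the remaining small or highly structured configurations (e.g.\ graphs close to complete bipartite, the extremal case of Dirac-type conditions) separately before handing off to Corollary~\ref{D}.
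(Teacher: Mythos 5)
The statement you are addressing is stated in the paper only as a conjecture; the paper offers no proof and records that the best known sufficient condition is the Nussbaum--Esfahanian bound $\delta(G)\ge \frac{2n}{3}-1$. So there is no argument in the paper to compare yours against, and your proposal must be judged on its own. Its preliminary reductions are sound: the count $|\N(x)\cap\N(y)|\ge \deg(x)+\deg(y)-(n-2)>2$ for non-adjacent $x,y$ is correct, it does give $\diam(G)\le 2$, the reformulation of isometry of an induced subgraph $H$ as ``every non-edge of $H$ has a common neighbour inside $H$'' is valid in a diameter-two graph, and the observation that the first deletion is free (every non-edge of $G$ has at least three common neighbours, hence at least two survive in $G-v$) is correct.

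However, there is a genuine gap, and you name it yourself: the inductive step requires a vertex of $H$ that is a common neighbour of no tight non-edge, and you only ``expect'' to establish this by a counting argument. That existence statement is where the entire difficulty of the conjecture lives. Each tight non-edge forbids two vertices, but $H$ can have on the order of $|V(H)|^2$ non-edges, so nothing prevents the tight ones from covering all of $V(H)$; the naive double count $2|T|<|V(H)|$ has no reason to hold, and you give no replacement for it. Moreover, failure of your invariant does not by itself stall the process (a non-edge may drop to one common neighbour and deletion can continue so long as that last common neighbour is never removed), so the invariant is sufficient but not necessary, and a correct proof would need either the covering statement or a genuinely self-sustaining weaker invariant. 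As written, the argument produces isometric subgraphs only down to order $n-1$, plus the chordal endgame via Corollary~\ref{D}; it does not prove the conjecture, which remains open in the paper.
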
 

Nussbaum  and Esfahanian have shown that $\delta(G)\ge \frac{2n}{3}-1$  forces $G$ to be dp~\cite{NF2}. 

 \begin{conj} If $G$ does not contain an induced cycle of length $5$ or greater, then $G$ is a dp  graph.
\end{conj}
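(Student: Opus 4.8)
The plan is to prove the conjecture by induction on $n=|V(G)|$, using Theorem~\ref{E} as the inductive step. The crucial observation is that the class of graphs with no induced cycle of length $5$ or greater is \emph{hereditary}: any induced cycle of $G-v$ is already an induced cycle of $G$, so $G-v$ stays in the class. Thus the engine of the proof is to locate a single vertex $v$ that (i) satisfies the cycle hypothesis of Theorem~\ref{E}, namely that every pair of non-adjacent $u,w\in\N(v)$ lies in a $4$-cycle of $G$, and (ii) leaves $G-v$ connected and in the class. Granting this, $G-v$ is dp by the inductive hypothesis and Theorem~\ref{E} promotes this to ``$G$ is dp.'' The base case $n=1$ is immediate.

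The entire difficulty is concentrated in the choice of $v$, and I would resolve it by taking $v$ to be an \emph{avoidable} vertex: a vertex such that every induced path $u\text{-}v\text{-}w$ on three vertices (so $u,w\in\N(v)$ with $uw\notin E(G)$) is contained in some induced cycle of $G$. I would invoke the known theorem that \emph{every graph contains an avoidable vertex} — a statement that generalizes the existence of simplicial vertices in chordal graphs and follows from the moplex theory of Berry and Bordat. This external result is the main obstacle and the real content of the argument; the rest is bookkeeping. For a general vertex, two non-adjacent neighbors might have no common neighbor except $v$, which would defeat Theorem~\ref{E}, so what is needed is precisely a vertex at which this degeneracy cannot occur.

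Next I would verify that an avoidable $v$ satisfies the hypothesis of Theorem~\ref{E}. Take non-adjacent $u,w\in\N(v)$. Then $u\text{-}v\text{-}w$ is an induced $P_3$ centered at $v$, so avoidability places it in an induced cycle $C$. Since $C$ is induced and contains $v$, the vertex $v$ is adjacent on $C$ to exactly two vertices, which must be $u$ and $w$; as $uw\notin E(G)$, the cycle has length at least $4$. Because $G$ has no induced cycle of length $5$ or greater, $C$ must have length exactly $4$, say $C:u,v,w,z,u$. Hence $z\neq v$ is a common neighbor of $u$ and $w$, i.e. $u,w$ lie in a $4$-cycle of $G$, which is exactly what Theorem~\ref{E} asks for. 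The two candidate lengths $3$ and $\ge 5$ are both excluded (a triangle would force $uw\in E(G)$), so this ``length exactly $4$'' reduction is clean.

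Finally I would check connectivity of $G-v$, which Theorem~\ref{E} tacitly requires. An avoidable vertex is never a cut vertex: if $v$ were a cut vertex, I could choose neighbors $u,w$ of $v$ lying in distinct components of $G-v$, whence $uw\notin E(G)$ and no $u$-$w$ path avoids $v$, so the induced path $u\text{-}v\text{-}w$ lies in no cycle, contradicting avoidability. Therefore $G-v$ is connected, remains in the class, and is dp by induction, and Theorem~\ref{E} yields that $G$ is dp. Carrying this out would in fact upgrade the conjecture to a theorem; the only genuine hurdle is the avoidable-vertex theorem, since the induction, the length-$4$ reduction, and the cut-vertex argument are all routine once it is in hand.
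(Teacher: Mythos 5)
You should first note that the paper contains no proof of this statement: it is posed as an open conjecture, supported only by the remark that it is ``inspired by the way cycle lengths enter into Theorems~\ref{E} and~\ref{A}.'' So there is no internal proof to compare against; your proposal goes strictly beyond the paper, and as far as I can verify it is correct. The key reduction is sound: for an avoidable vertex $v$ and non-adjacent $u,w\in\N(v)$, the induced path $u,v,w$ lies in an induced cycle, which must have length exactly $4$ (length $3$ is impossible because $uw\notin E(G)$, and length $5$ or more is excluded by hypothesis), so $u$ and $w$ lie in a $4$-cycle through a common neighbor $z\neq v$ --- precisely the hypothesis of Theorem~\ref{E}, and indeed the exact form in which that hypothesis is used in its proof. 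Your observation that an avoidable vertex is never a cut vertex correctly discharges the connectivity requirement hidden in the phrase ``$G-v$ is dp'' (dp is defined only for connected graphs), and heredity of the class closes the induction. The one external ingredient --- that every graph with at least one vertex contains an avoidable vertex --- is a genuine theorem, going back to Ohtsuki, Cheung and Fujisawa (1976) and also derivable from Berry and Bordat's moplex theorem, and its known proofs have nothing to do with distance-preserving graphs, so there is no circularity. In fact this is essentially how the conjecture was settled in the later literature: Beisegel, Chudnovsky, Gurvich, Milani\v{c} and Servatius used avoidable vertices in exactly this way to prove that graphs with no induced cycle of length at least five are dp. Modulo citing that theorem rather than reproving it, your argument upgrades the paper's conjecture to a theorem.
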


This conjecture is inspired by the way cycle lengths enter into Theorems~\ref{E} and~\ref{A}.

\begin{question}
 Prove or disprove that almost all graphs are dp. 
\end{question}

We note that almost all graphs have diameter two.  So the distance structure of such graphs may be simple enough to permit a proof of this last statement.

\bibliography{references}
\bibliographystyle{abbrv}



\end{document}